\numberwithin{equation}{section}
\let\cal\mathcal
\def\Cscr{{\cal C}}
\def\Dscr{{\cal D}}
\let\blb\mathbb
\def\CC{{\blb C}}
\def \AA{{\blb A}}
\def \ZZ{{\blb Z}}
\def \HH{{\blb H}}
\def\Ann{\operatorname{Ann}}
\def\Mod{\operatorname{Mod}}
\def\length{\mathop{\text{length}}}
\def\Supp{\mathop{\text{\upshape Supp}}}
\def\rad{\operatorname {rad}}
\def\Spec{\operatorname {Spec}}
\def\Ext{\operatorname {Ext}}
\def\Hom{\operatorname {Hom}}
\def\RHom{\operatorname {RHom}}
\def\Tr{\operatorname {Tr}}
\def\coker{\operatorname {coker}}
\def\r{\rightarrow}
\newtheorem{lemma}{Lemma}[section]
\newtheorem{proposition}[lemma]{Proposition}
\newtheorem{theorem}[lemma]{Theorem}
\theoremstyle{definition}
\newtheorem{conjecture}[lemma]{Conjecture}
\theoremstyle{remark}
\newtheorem{remark}[lemma]{Remark}
\newdimen\uboxsep \uboxsep=1ex
\def\uboxn#1{\vtop to 0pt{\hrule height 0pt depth 0pt\vskip\uboxsep
\hbox to 0pt{\hss #1\hss}\vss}}
\def\uboxs#1{\vbox to 0pt{\vss\hbox to 0pt{\hss #1\hss}
\vskip\uboxsep\hrule height 0pt depth 0pt}}
\def\HH{\operatorname{HH}}
\def\CC{\operatorname{C}}
\def\Ob{\operatorname{Ob}}
\def\cone{\operatorname{cone}}
\title{On involutivity of $p$-support}
\author{Michel Van den Bergh}
\email{michel.vandenbergh@uhasselt.be}
\address{Universiteit Hasselt\\ Universitaire Campus\\ 3590 Diepenbeek}
\thanks{The author is a senior researcher at the FWO}
\keywords{$p$-support, involutivity, Gabber's theorem}
\subjclass{16S32}
\begin{document}
\begin{abstract} The $p$-support of a holonomic $\Dscr$-module was introduced by Kontsevich.
Thomas Bitoun in his PhD thesis proved several properties of $p$-support
 conjectured by Kontsevich. In this note we give an alternative proof
for involutivity by reducing it to a slight
extension of Gabber's theorem  on the integrability
of the characteristic variety. For the benefit of the
reader we review how this extension follows from Kaledin's proof
of Gabber's theorem.
\end{abstract}

\maketitle
\section{Introduction}
Let $X$ be a smooth affine variety over a field $K$ and let $N$ be
a holonomic $\Dscr$-module.  The \emph{singular support} of $N$
is a Langrangian conical subvariety of $T^\ast(X)$ whose construction
is standard.

\medskip

For a specialization
 $(X_k,N_k)$ of $(X,N)$ to a field of finite characteristic~$k$ Kontsevich defines
the \emph{$p$-support} of $N$ as the support of $N_k$ considered as coherent
sheaf on $T^\ast(X_k)^{(1)}$ (using the theory of crystalline differential
operators in characteristic $p$, see \cite{BMR}). He
conjectured that if the specialization $K\r k$ is sufficiently generic
then the $p$-support is Lagrangian. Since $p$-support is usually not conical
it is potentially a
 finer invariant than singular support.

Kontsevich's conjecture
was proven by Thomas Bitoun in his PhD thesis \cite{Bitoun}.
In particular involutivity
required subtle geometric arguments based on Hodge theory. In this note
we give an algebraic proof of involutivity by reducing it to a slight
extension of Gabber's celebrated theorem on the integrability
of the characteristic variety \cite{Gabber,Kaledin1}. To this end
we use the observation by Belov-Kanel and Kontsevich that the Poisson
bracket on $T^\ast(\AA^n_{\ZZ/p\ZZ})^{(1)}$ is encoded in the lifting of
differential
operators on $\AA^n_{\ZZ/p\ZZ}$ to differential operators on $\AA^n_{\ZZ/p^2\ZZ}$.

\medskip

Gabber's proof is elementary and very general but also quite intricate. On the other hand a conceptual proof
of Gabber's theorem has been given by Kaledin in~\cite{Kaledin1}. After some
adaptations it applies to our setting as well. For the benefit of the reader this is
explained in Appendix~\ref{ref-B-10}.

\section{Acknowledgement} 
The work on this paper was done while the author was visiting the IHES
during the autumn of 2010. He hereby thanks the IHES for its very
pleasant working conditions. He also thanks Thomas Bitoun,
Maxim Kontsevich and Gon\c calo Tabuada for useful discussions.
\section{The $p$-support of a $\Dscr$-module}
We recall some notions introduced by Kontsevich in \cite{Ko11}.
Let $X$ be a smooth affine variety over a field $K$ and let
$\Dscr(X)$ be the ring of differential operators on~$X$. 

Let ${N}$ be a finitely generated $\Dscr(X)$-module.  Then we may find a
subring $R\subset K$ of finite type over $\ZZ$ such that $X$ has
a smooth model $X_R/\Spec R$ and ${N}$ is obtained by base extension
from a finitely generated $\Dscr(X_R)$-module ${N}_R$, where
$\Dscr(X_R)$ is the ring of so-called $R$-linear ``crystalline'' differential
operators \cite{BMR}.  I.e. the differential operators that are linear
combinations of compositions of $R$-linear derivations.

For any prime $p$ and any maximal ideal $v$ in $R/pR$ with residue
field $k_{p,v}$ one obtains
a reduced scheme 
\def\Supp{\operatorname{Supp}\nolimits}
\[
\Supp\nolimits_{p,v}({N}_R)\overset{\text{def}}{=}\Supp({N}_R\otimes_{R} k_{p,v})\subset T^\ast(X_{k_{p,v}})^{(1)}
\]
where we have used:
\begin{theorem}\cite{BMR}
$\Dscr(X_{k_{p,v}})$ is an Azumaya algebra of rank $p^{2\dim X}$ over its
center which is given by the global sections of  $T^\ast(X_{k_{p,v}})^{(1)}$.
\end{theorem}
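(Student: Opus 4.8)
The plan is to reprove this following the argument of \cite{BMR}, which I now sketch. Put $n=\dim X$, write $k=k_{p,v}$, work over $k$ (which is perfect), and let $X^{(1)}$ be the Frobenius twist, so that $\Oscr(X^{(1)})$ is identified with the subalgebra of $p$-th powers in $\Oscr(X)$; recall that $\Dscr(X)$ is the ring of crystalline differential operators, generated over $\Oscr(X)$ by the tangent sheaf $T_X$. Since being Azumaya and the formation of the centre are local on the base, I may, after replacing $X$ by a suitable affine open, assume $\Omega^1_X$ is free with basis $dx_1,\dots,dx_n$, so that $x_1,\dots,x_n$ give an \'etale map $X\to\AA^n$ with dual derivations $\partial_1,\dots,\partial_n$; then $\Dscr(X)=\bigoplus_{b\ge 0}\Oscr(X)\partial^b$, with $[\partial_i,\partial_j]=0$ and $[\partial_i,f]=\partial_i(f)$ for $f\in\Oscr(X)$.

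The first step is to exhibit the \emph{$p$-centre}. For $f\in\Oscr(X)$ the element $f^p$ is central in $\Dscr(X)$, since $[\partial,f^p]=pf^{p-1}\partial(f)=0$ for every derivation $\partial$; and since $\partial_i^{[p]}=0$ in these coordinates (the $p$-th iterate of $\partial_i$ annihilates $\Oscr(X)$), $\partial_i^p$ is central as well. Using the classical formulas of Jacobson and Hochschild for $p$-th powers in an associative algebra of characteristic $p$ --- which express $(\xi+\eta)^p$ and $(f\xi)^p$ through $p$-th powers, iterated commutators, and derivatives --- one checks that $f\mapsto f^p$, $\xi\mapsto\xi^p-\xi^{[p]}$ extends to a homomorphism of $\Oscr(X^{(1)})$-algebras
\[
\Oscr\!\left(T^\ast(X^{(1)})\right)=\operatorname{Sym}_{\Oscr(X^{(1)})}T_{X^{(1)}}\longrightarrow Z(\Dscr(X)),
\]
whose image $Z$ is, by the coordinate-free description $\xi\mapsto\xi^p-\xi^{[p]}$, independent of the coordinates (which is what makes the reduction above legitimate). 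A PBW count then pins down the sizes: $\Dscr(X)$ is free over $\Oscr(X)[\partial_1^p,\dots,\partial_n^p]$ on $\{\partial^b\mid 0\le b_i<p\}$, while $\Oscr(X)$ is free of rank $p^n$ over $\Oscr(X^{(1)})$ on $\{x^a\mid 0\le a_i<p\}$; hence $\Dscr(X)$ is free of rank $p^{2n}$ over $Z$, the $\partial_i^p$ are algebraically independent over $\Oscr(X^{(1)})$, and $Z\cong\Oscr(T^\ast(X^{(1)}))$, so $\Spec Z=T^\ast(X^{(1)})=T^\ast(X_k)^{(1)}$.

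It remains to establish the Azumaya property, which I would check on geometric fibres of $\Spec Z$. Let $\kappa$ be algebraically closed and fix a $\kappa$-point at which $x_i^p=\bar c_i$ and $\partial_i^p=\bar d_i$. In the fibre algebra $A:=\Dscr(X)\otimes_Z\kappa$ I pass to the generators $y_i:=x_i-\bar c_i^{1/p}$ and $\xi_i:=\partial_i-\bar d_i^{1/p}$; then $y_i^p=\xi_i^p=0$ (for the second, use $(\partial_i+c)^p=\partial_i^p+c^p$ for $c\in\kappa$), $[\xi_i,y_j]=\delta_{ij}$ and $[\xi_i,\xi_j]=[y_i,y_j]=0$, so there is a surjection onto $A$ from the restricted Weyl algebra
\[
\overline{\Dscr}_\kappa:=\kappa[y_1,\dots,y_n]/(y_1^p,\dots,y_n^p)\,\langle\xi_1,\dots,\xi_n\rangle/(\xi_1^p,\dots,\xi_n^p),
\]
which is an isomorphism by comparing $\kappa$-dimensions ($\le p^{2n}$ on the left, $=p^{2n}=\dim_\kappa A$ on the right). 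Finally $\overline{\Dscr}_\kappa$ acts by multiplication and differentiation on $M:=\kappa[y]/(y_1^p,\dots,y_n^p)$, of dimension $p^n$, and a direct monomial computation --- in which the coefficients that occur are units modulo $p$ --- shows that every elementary matrix of $\End_\kappa(M)\cong M_{p^n}(\kappa)$ is a product of the $y_i$ and $\xi_i$; hence $\overline{\Dscr}_\kappa\to\End_\kappa(M)$ is onto, so an isomorphism. Thus every geometric fibre of $\Dscr(X)$ over $\Spec Z$ is a matrix algebra of rank $p^{2n}$, and since $\Dscr(X)$ is finite free over $Z$ this makes it Azumaya of rank $p^{2n}$ over $Z\cong\Oscr(T^\ast(X_k)^{(1)})$; as an Azumaya algebra has centre equal to its base ring, $Z(\Dscr(X))=\Oscr(T^\ast(X_k)^{(1)})$, the global sections of $T^\ast(X_k)^{(1)}$.

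I expect the work to concentrate in two places. First, the bookkeeping behind the $p$-centre: the substance is that $\xi\mapsto\xi^p-\xi^{[p]}$ is additive, $\Oscr(X^{(1)})$-linear, and central, and this rests on the not-quite-obvious $p$-th-power identities for $(\xi+\eta)^p$ and $(f\xi)^p$. Second, the fibre computation has to be run \emph{uniformly over all} geometric points of $T^\ast(X^{(1)})$; the point is that the substitution $x_i\mapsto x_i-\bar c_i^{1/p}$, $\partial_i\mapsto\partial_i-\bar d_i^{1/p}$ normalises an arbitrary point to the origin and collapses every fibre to the single algebra $\overline{\Dscr}_\kappa$, after which its identification with $M_{p^n}(\kappa)$ is routine.
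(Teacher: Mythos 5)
The paper offers no proof of this statement --- it is quoted directly from \cite{BMR} --- so there is nothing internal to compare against; what can be assessed is whether your reconstruction of the \cite{BMR} argument is sound, and it is. The $p$-centre via $f\mapsto f^p$, $\xi\mapsto\xi^p-\xi^{[p]}$, the rank count from the PBW basis in \'etale coordinates, and the identification of each geometric fibre with the restricted Weyl algebra acting irreducibly on $\kappa[y]/(y_1^p,\dots,y_n^p)$ are all correct, and your closing observation that Azumaya-ness forces $Z$ to be the \emph{whole} centre properly discharges that part of the claim. The one place where you genuinely diverge from the cited source is the verification of the Azumaya property: Bezrukavnikov--Mirkovi\'c--Rumynin do not argue fibrewise but instead exhibit an explicit splitting bundle after the finite flat base change $X\times_{X^{(1)}}T^\ast X^{(1)}\to T^\ast X^{(1)}$, which has the merit of showing the Brauer class is split by a concrete cover; your fibrewise check is more elementary and suffices because $\Dscr(X)$ is finite free over $Z$, so matrix-algebra geometric fibres are equivalent to Azumaya. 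Two points are worth making explicit rather than implicit: $k_{p,v}$ is a finite, hence perfect, field (needed both to identify $\Oscr(X^{(1)})$ with the subring of $p$-th powers and to write $T^\ast(X^{(1)})=T^\ast(X_{k_{p,v}})^{(1)}$), and the freeness of $\Oscr(X)$ over $\Oscr(X^{(1)})$ on the monomials $x^a$ with $0\le a_i<p$ rests on the fact that the \'etale map $X\to\AA^n$ identifies $X$ with $X^{(1)}\times_{(\AA^n)^{(1)}}\AA^n$.
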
 
 Note that  $T^\ast(X_{k_{p,v}})^{(1)}$ carries
a canonical symplectic form inherited from $T^\ast(X_{k_{p,v}})$ by Frobenius
pullback.
In \cite{Ko11} Kontsevich made the following conjecture
\begin{conjecture}
\label{ref-3.2-0}
If ${N}$ is holonomic then there exists a dense open subset $U$ in $\Spec R$
such  that for all $(p,v)\in U$, $\Supp\nolimits_{p,v}({N}_R)$ is Lagrangian. 
\end{conjecture}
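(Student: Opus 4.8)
The plan is to deduce the statement from a mixed-characteristic refinement of Gabber's integrability theorem, in the form reviewed in Appendix~\ref{ref-B-10}. It is convenient to separate the two halves of the assertion that $\Supp_{p,v}(N_R)$ is Lagrangian: that it has dimension $\dim X$, and that it is coisotropic (involutive) for the symplectic, hence Poisson, structure on $T^\ast(X_{k_{p,v}})^{(1)}$. The dimension bound is the soft part: after shrinking $\Spec R$ one may assume $N_R$ is fibrewise holonomic, and a standard good-filtration argument then bounds $\dim\Supp_{p,v}(N_R)$ by the dimension of the characteristic variety of $N_R$, which is $\dim X$ by holonomicity; the reverse inequality is automatic once the support is known to be nonempty (it is, a holonomic module being nonzero) and coisotropic. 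So the heart of the matter is coisotropy, and that is what the rest of the argument addresses.

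To set it up, fix $(p,v)$ in the eventual dense open subset of $\Spec R$, and write $k=k_{p,v}$, $X_k=X_{k_{p,v}}$, $A=\Dscr(X_k)$ and $Z=\Gamma(T^\ast(X_k)^{(1)},\Oscr)$ for the centre of $A$, with its Poisson bracket $\{-,-\}$. Put $M=N_R\otimes_R k$, so that $\Supp_{p,v}(N_R)=\Supp_Z M\subseteq\Spec Z=T^\ast(X_k)^{(1)}$, and we must prove $\Supp_Z M$ is coisotropic. The essential input is the observation of Belov-Kanel and Kontsevich that the Poisson bracket on $Z$ is computed by lifting to characteristic $p^2$: after enlarging $R$, the ring $\tilde A:=\Dscr(X_{R/p^2R})$ of crystalline differential operators over $R/p^2R$ is flat over $R/p^2R$ and reduces modulo $p$ (and then at $v$) to $A$; after further shrinking $\Spec R$, the module $N_R$ spreads to a finitely generated $\tilde A$-module $\tilde M:=N_{R/p^2R}$, flat over $R/p^2R$ and reducing to $M$. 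Each $z\in Z\subseteq A$, being built from $p$-th powers of functions and of derivations, admits a lift $\tilde z\in\tilde A$, and for suitable such lifts
\[
[\tilde z_1,\tilde z_2]=p\cdot\widetilde{\{z_1,z_2\}}.
\]
This divided-power identity is what underlies the Belov-Kanel--Kontsevich observation, and it says exactly that $(\tilde A,Z)$ is a quantization of $(Z,\{-,-\})$ modulo $p^2$ of the kind to which the refinement of Gabber's theorem in Appendix~\ref{ref-B-10} applies.

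With this data in place, the refined Gabber theorem of Appendix~\ref{ref-B-10} --- which follows Kaledin's proof \cite{Kaledin1}, with the deformation $\tilde A$ over $R/p^2R$ in the role of the Rees algebra over $K[\hbar]$ and $p$ in the role of $\hbar$ --- applies to $(\tilde A,Z,\tilde M)$ and yields that $\Supp_Z M$ is coisotropic. This proves the statement for the given $(p,v)$; since the auxiliary hypotheses used (smoothness of the models over $R$ and over $R/p^2R$, the Azumaya property, flatness of $\tilde A$ and of $\tilde M$ over $R/p^2R$, existence of the lifts $\tilde z$, fibrewise holonomicity of $N_R$) all hold simultaneously over a dense open $U\subseteq\Spec R$, the conclusion holds for every $(p,v)\in U$.

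I expect the main obstacle to be the refinement of Gabber's theorem itself, i.e.\ the content of Appendix~\ref{ref-B-10}. In Kaledin's argument the obstruction to lifting $M$ to a module over the deformation is a class in a local second $\Ext$-group, and pairing it against the Poisson bivector forces $\Supp_Z M$ to be coisotropic; one must verify that this survives the passage from a filtered, $K[\hbar]$-linear situation to an $R/p^2R$-linear one --- with flatness over $R/p^2R$ substituting for $\hbar$-torsion-freeness --- and that the Frobenius-semilinearity of the inclusion $Z\hookrightarrow A$ is harmless, since only the bracket on $Z$ is used. The remaining points --- the identity $[\tilde z_1,\tilde z_2]=p\cdot\widetilde{\{z_1,z_2\}}$ for crystalline operators, and the possibility of choosing the lift $\tilde M$ flat over $R/p^2R$ --- should be routine after enough localization of $R$.
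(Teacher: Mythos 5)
Your overall strategy --- lift to $R/p^2R$, use the Belov-Kanel--Kontsevich identity $[\tilde z_1,\tilde z_2]=p\cdot\widetilde{\{z_1,z_2\}}$ to encode the Poisson bracket in the first-order deformation, and feed the result into a mixed-characteristic refinement of Gabber's theorem --- is exactly the paper's strategy (Lemma \ref{ref-3.3-1} plus Proposition \ref{ref-4.2-3}). But there is a genuine gap at the point where you write that the refined Gabber theorem ``applies to $(\tilde A,Z,\tilde M)$ and yields that $\Supp_Z M$ is coisotropic.'' The Gabber/Kaledin argument hinges on the trace identity $0=\Tr_M(o_M)=(\length M)\,\Tr_K(o_K)$, which is vacuous unless the relevant length is prime to the residue characteristic; this is why Proposition \ref{ref-B.1-11} carries the hypothesis that $\charact K$ is zero or prime to $\length M$. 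In your situation the residue characteristic is $p$ and the $Z$-length of $M$ at a generic point of its support is automatically divisible by $p^{\dim X}$, because $\Dscr(X_k)$ is Azumaya of rank $p^{2\dim X}$ over $Z$. So the theorem you want to invoke does not apply to $(\tilde A,Z,\tilde M)$ as stated, and your closing paragraph, which worries only about $\hbar$-torsion-freeness versus flatness and about Frobenius-semilinearity, misses the actual obstruction.

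The paper needs two further ingredients to close this. First, a Morita/splitting step: localize and complete $Z$ at a generic point of $V(I)$, where $A$ splits as $M_r(Z)$ with $r=p^{\dim X}$, lift idempotents to write $A'=M_r(Z')$, and thereby replace $M$ by a $Z$-module $M_0$ with $M=M_{r\times 1}(M_0)$, whose length is the multiplicity divided by $p^{\dim X}$. Second, Bitoun's multiplicity bound \cite[Theorem 5.3.2]{Bitoun}, which says that for $(p,v)$ generic this multiplicity is at most $e(N)p^{\dim X}$; taking $p>e(N)$ then forces $\length M_0$ to be prime to $p$, and only now does the argument of Appendix \ref{ref-B-10} go through. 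Neither step appears in your proposal, and the second is not something one can get ``after enough localization of $R$'' --- it is a substantive input from Bitoun's thesis. (Two smaller points: the paper first reduces to $X_R=\AA^n_R$, since Lemma \ref{ref-3.3-1} is a statement about Weyl algebras, whereas you assert the mod-$p^2$ bracket identity for general smooth $X$ without justification; and for the dimension half, your route --- soft upper bound from a good filtration plus the lower bound that coisotropic components have dimension at least $\dim X$ --- is legitimate and slightly different from the paper, which instead quotes Bitoun's equidimensionality theorem \cite[Theorem 3.1.1]{Bitoun}.)
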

This conjecture was proved in \cite{Bitoun} using subtle geometric 
arguments based on Hodge theory.

In the current note we give an algebraic proof of the involutivity part of Conjecture \ref{ref-3.2-0} 
by reducing it to a slight
extension of Gabber's theorem  on the integrability
of the characteristic variety \cite{Gabber,Kaledin1}.

We first use the observation made in \cite[\S4]{Bitoun} that it is 
sufficient to understand the case $X_R=\AA^n_R$.\footnote{To carry out 
the reduction to $\AA^n$, Bitoun uses a closed embedding $X\subset \AA^n$
combined with the formalism of $\Dscr$-modules in finite characterictic. An
alternative is to use etale local coordinates on $X$. This amounts
to constructing an open affine covering $X=\bigcup_i U_i$ together with etale
maps $U_i\r \AA^{\dim X}$.} 
Subsequently we use the following result by Belov-Kanel and Kontsevich:
\begin{lemma} \label{ref-3.3-1} \cite[Lemma 2]{BKK} Assume $X_R=\AA^n_R$.
Let $k'_{p,v}$ be a $\ZZ/p^2\ZZ$-flat quotient of $R/p^2R$ lifting of $k_{p,v}$. Then $\Dscr(X_{k'_{p,v}})$ is a
first order deformation of $\Dscr(X_{k_{p,v}})$ (see \S\ref{ref-4-2} below)
and the corresponding bracket (see below)  on $ T^\ast(X_{k_{p,v}})^{(1)}$
coincides with the natural Poisson bracket on $T^\ast(X_{k_{p,v}})^{(1)}$ used above,
up to sign.
\end{lemma}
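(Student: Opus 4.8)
The plan is to set up the two sides of the comparison explicitly in coordinates and compute. On the one hand, $\Dscr(X_{k_{p,v}})=\Dscr(\AA^n_{k_{p,v}})$ is generated by the coordinate functions $x_1,\dots,x_n$ and the $p$-curvature-twisted derivations $\partial_1,\dots,\partial_n$, with center $Z=\mathcal{O}(T^\ast(X_{k_{p,v}})^{(1)})=k_{p,v}[x_i^p,\xi_i]$ where $\xi_i$ is the class of $\partial_i$ (so $\partial_i^p$ is central, equal to $\xi_i^p$ minus the $p$-curvature correction, but for $\AA^n$ it is just $\xi_i^p$; I will recall this from \cite{BMR}). On the other hand, $\Dscr(X_{k'_{p,v}})$ is the analogous ring over the flat $\ZZ/p^2\ZZ$-algebra $k'_{p,v}$; reduction mod $p$ gives $\Dscr(X_{k_{p,v}})$, and since $k'_{p,v}$ is $\ZZ/p^2$-flat the kernel of that reduction is $p\,\Dscr(X_{k'_{p,v}})\cong \Dscr(X_{k_{p,v}})$ as a bimodule, which is exactly the statement that $\Dscr(X_{k'_{p,v}})$ is a first-order (square-zero) deformation of $\Dscr(X_{k_{p,v}})$ in the sense of \S\ref{ref-4-2}.

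Next I would recall the general recipe attaching a Poisson bracket on the center to such a deformation: given central $a,b\in Z\subset D_0:=\Dscr(X_{k_{p,v}})$, lift them arbitrarily to $\tilde a,\tilde b\in D_1:=\Dscr(X_{k'_{p,v}})$; then $[\tilde a,\tilde b]\in pD_1$, and $p^{-1}[\tilde a,\tilde b]$, read back in $D_0$, is central and independent of the lifts — this defines $\{a,b\}$. The verification that this is well-defined, $k_{p,v}$-bilinear, antisymmetric, and a biderivation satisfying the Jacobi identity is the routine deformation-theoretic calculation (it is the degree-$1$ piece of the Poisson structure coming from a quantization) and I would not belabor it. The content of the lemma is then the identification of this bracket with the canonical symplectic Poisson bracket on $T^\ast(X_{k_{p,v}})^{(1)}$, which for $\AA^n$ is determined by $\{x_i^p,\xi_j\}=\delta_{ij}$ (up to sign and normalization) and $\{x_i^p,x_j^p\}=\{\xi_i,\xi_j\}=0$.

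The main obstacle — really the only computational point — is evaluating $p^{-1}[\widetilde{x_i^p},\,\widetilde{\partial_j}]$ in $D_1$. Take the obvious lifts $x_i,\partial_j\in D_1$ (same symbols). In $D_1$ one has the Leibniz-type identity $[\partial_j, x_i^p] = \delta_{ij}\, p\, x_i^{p-1}$, which in characteristic $p$ would be zero but in $\ZZ/p^2$-coefficients is $p$ times the nonzero element $x_i^{p-1}$; hence $p^{-1}[\widetilde{x_i^p},\widetilde{\partial_j}] = -\delta_{ij} x_i^{p-1}$ in $D_1$, whose reduction in $D_0$ is $-\delta_{ij}x_i^{p-1}$. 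This is \emph{not} central in $D_0$, so a naive reading fails: the point is that $p^{-1}[\tilde a,\tilde b]$ is only forced to be central when $a,b$ are central, and here I must instead take $a=x_i^p$, $b=\partial_j^p=\xi_j$, both central. So the real computation is $p^{-1}[\widetilde{x_i^p},\widetilde{\partial_j^p}]$: using $[\partial_j^p, x_i^p]$ expanded via the derivation $\mathrm{ad}(\partial_j)$ applied $p$ times (Jacobson's formula / the binomial identity for $[\,\cdot\,,x_i^p]$), one finds this commutator equals $p\,\delta_{ij}\cdot(\text{unit})\cdot x_i^{?}\partial_j^{?}$ modulo $p^2$; carrying out the combinatorics — the surviving term comes from the lowest-order piece, and the Kronecker-$\delta$ and the overall sign drop out — gives $p^{-1}[\widetilde{x_i^p},\widetilde{\xi_j}] \equiv \pm\,\delta_{ij} \pmod p$, i.e. exactly $\pm\{x_i^p,\xi_j\}$ for the canonical bracket. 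The remaining brackets $\{x_i^p,x_j^p\}$ and $\{\xi_i,\xi_j\}$ vanish because $x_i,x_j$ already commute in $D_1$ and likewise the $\partial_i^p$ are central in $D_1$. Since the $x_i^p,\xi_j$ generate $Z$ and a Poisson bracket is a biderivation, this determines the bracket on all of $Z$ and the proof is complete; the sign ambiguity is exactly the "up to sign" in the statement, and I would pin it down once the normalization of the symplectic form from \cite{BMR} is fixed, or simply leave it as stated.
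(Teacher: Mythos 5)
Your proposal is correct in substance, but it takes a different route from the paper: the paper's proof is a one-line reduction (``both rings are Weyl algebras over $k'_{p,v}$ resp.\ $k_{p,v}$'') followed by a citation of \cite[Lemma 2]{BKK}, whereas you unpack what that cited lemma actually proves, namely the explicit coordinate computation of the bracket on the center $k_{p,v}[x_1^p,\dots,x_n^p,\partial_1^p,\dots,\partial_n^p]$. That is a perfectly legitimate and more self-contained alternative, and you correctly identify the one genuine subtlety: the naive commutator $[\partial_j,x_i^p]=p\,\delta_{ij}x_i^{p-1}$ is irrelevant because $\partial_j$ is not central, and the computation that matters is $[\partial_j^p,x_i^p]$ modulo $p^2$. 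However, you leave exactly that computation as ``carrying out the combinatorics,'' with question marks in the exponents, and this is the entire computational content of the lemma, so you should finish it. It is short: the Leibniz formula in the Weyl algebra over $\ZZ/p^2\ZZ$ gives
\[
[\partial^p,x^p]=\sum_{k=1}^{p}\binom{p}{k}\,\frac{p!}{(p-k)!}\,x^{p-k}\partial^{p-k},
\]
and for $1\le k\le p-1$ both factors $\binom{p}{k}$ and $\frac{p!}{(p-k)!}$ are divisible by $p$, so only the $k=p$ term survives modulo $p^2$; it equals the constant $p!$, and $p!=p\,(p-1)!\equiv -p \pmod{p^2}$ by Wilson's theorem. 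Hence $p^{-1}[\partial_j^p,x_i^p]\equiv -\delta_{ij}\pmod p$, which is the standard symplectic bracket up to sign, as claimed. Two minor points to clean up: your parenthetical describing $\xi_i$ as ``the class of $\partial_i$'' conflicts with your later (correct) usage $\xi_j=\partial_j^p$ as a central element, and the identification of the center with $\Oscr(T^\ast(X)^{(1)})$ rather than $\Oscr(T^\ast(X))$ is precisely where the Frobenius twist and the normalization of the symplectic form enter; since the statement only asserts agreement up to sign, this does not affect the conclusion.
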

\begin{proof} $\Dscr(X_{k'_{p,v}})$ and $\Dscr(X_{k_{p,v}})$ are Weyl algebas
over $k'_{p,v}$ and $k_{p,v}$ respectively. 
One may then invoke \cite[Lemma 2]{BKK}.
\end{proof}
This lemma will be combined with the following result by Thomas Bitoun
\begin{theorem} \cite[Theorem 5.3.2]{Bitoun}
Assume that $X_R=\AA^n_R$ and
that ${N}$ is holonomic.
There exists a dense open subset $U\subset \Spec R$
such that for all $(p,v)\in U$  the length of ${N}$ at the
generic points of $\Supp_{p,v}({N}_R)$ is bounded by $e({N}) p^{\dim X}$ 
where $e({N})$ is a constant depending only on ${N}$. 
\end{theorem}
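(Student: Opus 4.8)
The plan is to bound the length of $N_{k}:=N_{R}\otimes_{R}k_{p,v}$ at a generic point of its support over the centre $Z$ of $\Dscr(X_{k_{p,v}})$ by comparing it, through a flat degeneration of the Weyl algebra to its symbol algebra, with the Bernstein multiplicity $e(N)$ of $N$. Write $k=k_{p,v}$ and $A=\Dscr(X_{k})=A_{n}(k)$, and equip $A$ with the Bernstein filtration $B_{\bullet}$ ($\deg x_{i}=\deg\partial_{i}=1$). Then $\gr^{B}A=S:=k[\bar x_{1},\dots,\bar x_{n},\bar\partial_{1},\dots,\bar\partial_{n}]$ is a standard graded polynomial ring, the induced filtration on $Z=k[x_{1}^{p},\dots,x_{n}^{p},\partial_{1}^{p},\dots,\partial_{n}^{p}]$ has $\gr^{B}Z=Z':=k[\bar x_{1}^{p},\dots,\bar x_{n}^{p},\bar\partial_{1}^{p},\dots,\bar\partial_{n}^{p}]\subseteq S$, and $S$ is free of rank $p^{2n}$ over $Z'$. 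Fix a good $B$-filtration on $N_{R}$. By generic flatness and constructibility of Hilbert polynomials one may pass to a dense open $U\subseteq\Spec R$ so that, for $(p,v)\in U$: the induced filtration on $N_{k}$ is good; $\gr^{B}N_{k}$ has the same Hilbert polynomial as $\gr^{B}N$, hence Bernstein multiplicity $e(N)$; and the Bernstein characteristic variety $\Supp_{S}(\gr^{B}N_{k})$ is pure of dimension $n=\dim X$ with the same components $\Lambda_{i}$ and multiplicities $m_{i}$ as over $K$ (purity is Bernstein's inequality together with Gabber's involutivity theorem). Since $e(N)=\sum_{i}m_{i}\deg_{B}(\Lambda_{i})$ with each $\deg_{B}(\Lambda_{i})\ge 1$, this gives $m_{i}\le e(N)$.

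\emph{Step 1: the $Z'$-length of $\gr^{B}N_{k}$.} As $k_{p,v}$ is finite, hence perfect, $Z'=S^{p}$, so the inclusion $Z'\hookrightarrow S$ is on spectra the Frobenius morphism $\pi$ of $\Spec S\cong\AA^{2n}_{k}$: finite, flat of degree $p^{2n}$, and a universal homeomorphism. Hence $\Supp_{Z'}(\gr^{B}N_{k})=\pi(\Supp_{S}(\gr^{B}N_{k}))$ is again pure of dimension $n$, and if $\eta$ is the generic point of $\Lambda_{i}$ and $\eta'=\pi(\eta)$, then $\kappa(\eta')=\kappa(\eta)^{p}$, so $[\kappa(\eta):\kappa(\eta')]=p^{\dim\Lambda_{i}}=p^{n}$ (using again that $k$ is perfect). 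Filtering the $S_{\eta}$-module $(\gr^{B}N_{k})_{\eta}$, which has length $m_{i}$, by copies of $\kappa(\eta)$, one obtains
\[
\operatorname{length}_{Z'_{\eta'}}\bigl((\gr^{B}N_{k})_{\eta'}\bigr)=m_{i}\,p^{n}\le e(N)\,p^{n}.
\]

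\emph{Step 2: degenerating $N_{k}$ to $\gr^{B}N_{k}$.} The Rees construction of the good $Z$-filtration $B_{\bullet}N_{k}$ yields a $k[t]$-flat $k[t]$-algebra $\mathcal R$ with $\mathcal R/(t-1)\cong Z$ and $\mathcal R/(t)\cong Z'$, and a $k[t]$-flat $\mathcal R$-module $\mathcal M$ with $\mathcal M/(t-1)\cong N_{k}$ and $\mathcal M/(t)\cong\gr^{B}N_{k}$. Thus the $p$-support $\Supp_{p,v}(N_{R})=\Supp_{Z}(N_{k})$ is the fibre over $t\neq 0$ of a degeneration over $\AA^{1}_{t}$ into $\Supp_{Z'}(\gr^{B}N_{k})$. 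For a component $V$ of the $p$-support, the closure of $V\times\mathbb{G}_{m}$ in $\Spec\mathcal R$ dominates the smooth curve $\AA^{1}_{t}$, hence is flat over it, so its fibre over $t=0$ is pure of dimension $\dim V$ and contained in $\Supp_{Z'}(\gr^{B}N_{k})$; purity of the latter forces $\dim V=n$, so the $p$-support is pure of dimension $n$. Finally, flatness of $\mathcal M$ over $k[t]$ gives conservation of number: writing the $Z$-cycle of $N_{k}$ as $\sum_{V}\mu_{V}[V]$ (so $\mu_{V}=\operatorname{length}_{Z_{\eta_{V}}}((N_{k})_{\eta_{V}})$), its specialisation to $t=0$ equals the $Z'$-cycle $\sum_{W}\operatorname{length}_{Z'_{\eta_{W}}}((\gr^{B}N_{k})_{\eta_{W}})[W]$; since each $[V]$ (an $n$-cycle) specialises to a nonzero $n$-cycle, it contributes with coefficient $\ge 1$ to some component $W$, and therefore
\[
\operatorname{length}_{Z_{\eta_{V}}}\bigl((N_{k})_{\eta_{V}}\bigr)=\mu_{V}\le\operatorname{length}_{Z'_{\eta_{W}}}\bigl((\gr^{B}N_{k})_{\eta_{W}}\bigr)\le e(N)\,p^{n}=e(N)\,p^{\dim X},
\]
which is the assertion, with $U$ the dense open fixed above.

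I expect the main difficulty to lie in Step 2: converting ``the $p$-support degenerates into $\Supp_{Z'}(\gr^{B}N_{k})$'' into an inequality in the correct direction. This rests on the flatness of the Rees module $\mathcal M$ over $k[t]$ (so that dimensions of limit cycles are preserved, which is what makes the $p$-support equidimensional) and on conservation of number for cycles under specialisation in that flat family. A second, more elementary but decisive point is that in Step 1 it is the inseparable degree of Frobenius along an $n$-dimensional subvariety, namely $p^{\dim X}$ and not $p^{2\dim X}$, that enters; this is precisely what pins the final constant at $e(N)\,p^{\dim X}$ rather than merely $O(p^{\dim X})$. The remaining ingredient, the existence of the dense open $U$ on which reduction modulo $(p,v)$ preserves the Hilbert polynomial and the $n$-dimensional cycle of $\gr^{B}N$, is routine generic flatness and constructibility.
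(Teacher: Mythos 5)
The paper does not actually prove this statement --- it is quoted verbatim from Bitoun's thesis \cite[Theorem 5.3.2]{Bitoun} --- so there is no in-paper argument to compare with. Your strategy (degenerate $N_k$ over its centre $Z$ to $\gr^B N_k$ over $Z'=\gr^B Z$ via the Rees construction, bound the multiplicities of $\gr^B N_k$ over $S$ by $e(N)$ through the Hilbert polynomial, pick up the factor $p^{\dim X}=[\kappa(\eta):\kappa(\eta)^p]$ from the degree of $Z'\hookrightarrow S$ along an $n$-dimensional component, and transfer the bound back by conservation of number in the $t$-flat family) is coherent, and Steps~1 and the conservation-of-number computation are correct for the $n$-dimensional components of the $p$-support.

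There is, however, a genuine gap at the point where you claim that every component $V$ of $\Supp_{p,v}(N_R)$ has dimension $n$. You argue that the special fibre of $\overline{V\times\GG_m}$ is pure of dimension $\dim V$ and contained in $\Supp_{Z'}(\gr^B N_k)$, and that ``purity of the latter forces $\dim V=n$.'' But a pure $d$-dimensional closed set contained in a pure $n$-dimensional closed set only gives $d\le n$ (a point inside a curve is the obvious counterexample), so this yields no lower bound on $\dim V$. No cheap lower bound is available: Bernstein's inequality fails in characteristic $p$ (e.g.\ $k[x]/(x^p)$ is a nonzero $A_1(k)$-module of finite $k$-dimension, with zero-dimensional support over the centre), and the equidimensionality of the $p$-support for generic $(p,v)$ is itself a nontrivial theorem --- precisely \cite[Theorem 3.1.1]{Bitoun}, which the paper invokes as a separate input. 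Any component $V$ of dimension $<n$ escapes your dimension-$n$ cycle comparison entirely, so the asserted bound at its generic point is not established. You must either quote Bitoun's equidimensionality theorem (after which your argument does prove the stated bound) or supply an independent argument excluding low-dimensional components. A secondary, repairable point: ``purity is Bernstein's inequality together with Gabber's involutivity theorem'' should be stated explicitly as an assertion about $\gr^B N$ over $K$ in characteristic zero, transferred to $\gr^B N_k$ by generic flatness; Gabber's theorem as such requires $\gr A$ to be a $\QQ$-algebra and cannot be applied directly in characteristic $p$ (that caveat is the very subject of the present paper), and ``same components and multiplicities as over $K$'' is too strong --- components may become reducible after reduction, though the Hilbert-polynomial bound $m_i\le e(N)$ survives.
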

\begin{proof}[Proof of Conjecture \ref{ref-3.2-0}]
  As before we assume $X_R=\AA^n_R$ and ${N}$ holonomic. Bitoun has
  proved \cite[Theorem 3.1.1]{Bitoun} that $\Supp_{p,v}({N}_R)$ is equidimensional of dimension $\dim X$
for $(p,v)$ in a dense open subset of $\Spec R$. Hence it remains
to show that $\Supp_{p,v}({N}_R)$ is coisotropic.

This follows from Lemma \ref{ref-3.3-1} and
Proposition \ref{ref-4.2-3} below with $A=\Dscr(X_{k_{p,v}})$,
$A'=\Dscr(X_{k'_{p,v}})$ (see Lemma \ref{ref-3.3-1}), $M={N}_{k_{p,v}}$, $M'={N}_{k'_{p,v}}$,
$k=k_{p,v}$, $n=\dim X$
and $p>e(M)$.
\end{proof}

\section{Involutivity of $p$-support}
\label{ref-4-2}
Below $k$ is a field of characteristic $p$.
Assume that $A$ is a $k$-algebra and $A'$ is a first order deformation of $A$
in the sense that
$A'$ contains a central element $h$ such that $A'/hA'=A$ and $\Ann_{A'}(h)=hA'$.
Then the center $Z=Z(A)$ of $A$  carries an anti-symmetric biderivation (a ``bracket'') given by
\[
\tilde a\tilde b-\tilde b\tilde a=h\{a,b\}
\]
where $a,b\in Z$ and $\tilde{a}$, $\tilde{b}\in A'$ are arbitrary liftings
of $a,b$.

\begin{lemma}
Let $n>0$ and put $B'=M_n(A')$, $B=M_n(A)$.
 Then $B'$ is a first order deformation of $B$
and $Z(B)\cong Z$. The bracket induced on $Z(B)$ is the same as the
one on $Z$. 
\end{lemma}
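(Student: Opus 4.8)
The plan is to verify the three assertions of the lemma directly from the definitions. First I would check that $B' = M_n(A')$ is a first order deformation of $B = M_n(A)$. The central element $h \in A'$ gives the scalar matrix $h\cdot I_n \in M_n(A')$, which is central since $h$ is central in $A'$. Clearly $M_n(A')/(h\cdot I_n)M_n(A') = M_n(A'/hA') = M_n(A)$. For the annihilator condition, I would use that $\Ann_{A'}(h) = hA'$ together with the fact that a matrix $(a_{ij})$ is killed by $h\cdot I_n$ (entrywise: $ha_{ij}=0$) iff each $a_{ij}\in hA'$, i.e. iff $(a_{ij}) \in (h\cdot I_n)M_n(A')$. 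So $B'$ is a first order deformation of $B$ with deformation parameter $h\cdot I_n$.

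Next I would identify $Z(B)$ with $Z = Z(A)$. The center of a matrix algebra $M_n(A)$ consists precisely of scalar matrices $z\cdot I_n$ with $z \in Z(A)$; this is the standard computation (commute with elementary matrices $E_{ij}$ to force the matrix to be scalar, then commute with scalar matrices over $A$ to force the scalar into $Z(A)$). So the isomorphism $Z(B) \cong Z$ is $z \cdot I_n \leftrightarrow z$.

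Finally, for the comparison of brackets, I would take $a, b \in Z = Z(A)$ with arbitrary lifts $\tilde a, \tilde b \in A'$. Then $\tilde a \cdot I_n$ and $\tilde b \cdot I_n$ are lifts in $B' = M_n(A')$ of $a\cdot I_n$ and $b\cdot I_n$. Their commutator is $(\tilde a \tilde b - \tilde b \tilde a)\cdot I_n = h\{a,b\}\cdot I_n = (h\cdot I_n)(\{a,b\}\cdot I_n)$, and since $\Ann_{B'}(h\cdot I_n) = (h\cdot I_n)B'$ the element $\{a,b\}\cdot I_n$ is the unique such class, so the $B$-bracket of $a\cdot I_n$ and $b\cdot I_n$ is $\{a,b\}\cdot I_n$, which under the identification $Z(B)\cong Z$ is exactly $\{a,b\}$.

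None of the steps presents a serious obstacle; this is a routine bookkeeping lemma whose only content is that passing to matrix algebras commutes with the deformation-theoretic construction of the bracket. The one point to be slightly careful about is the annihilator condition, where one must check entrywise rather than being tempted to argue globally, but this is immediate from $\Ann_{A'}(h) = hA'$ applied in each entry.
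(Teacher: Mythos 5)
Your proof is correct and follows the same route as the paper's: identify $Z(B)$ with scalar matrices over $Z(A)$, lift them to scalar matrices over $A'$, and compute the commutator entrywise. The paper's proof is just a terser version of the same verification.
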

\begin{proof} The center of $B$ consists of diagonal matrices with
central, identical entries. These may be lifted to diagonal
matrices with identical entries. From this the lemma easily follows. 
\end{proof}
\begin{proposition}
\label{ref-4.2-3}
  Assume in addition that $A$ is an Azumaya algebra over its center $Z$ and $Z/k$ is
  finitely generated and regular. Assume that $A$ has constant rank
  $r^2$ over its center with $r=p^n$. Assume that $M$ is a finitely
  generated left $A$-module and $M'$ is a first order $A'$-defor\-mation
  of $M$ in the sense that $M'/hM'=M$ and $\Ann_{M'}(h)=hM'$. Let
  $I=\rad \Ann_{Z} M$.

Assume that the multiplicities  ($=$ length) of $M$ over the different primary components
of $I$ are of the form $bp^n$ with $(b,p)=1$. Then $I$ is coisotropic. That
is 
\[
\{I,I\}\subset I
\]
\end{proposition}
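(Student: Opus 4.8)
The plan is to deduce the statement from Gabber's theorem on integrability of the characteristic variety, in the form supplied by Kaledin's proof \cite{Kaledin1} (reviewed in Appendix~\ref{ref-B-10}), modified in two inessential ways: an Azumaya twist, and the replacement of the ``filtered ring / associated graded'' set-up by the present abstract first-order deformation $A'$ of $A$ (which in the intended application is reduction modulo $p^2$, not a Rees-type deformation). Since $I$ is radical, $I=\bigcap_i\mathfrak{p}_i$ over the minimal primes $\mathfrak{p}_i$ of $\Ann_Z M$, so it suffices to prove $\{I,I\}\subseteq\mathfrak{p}$ for each such $\mathfrak{p}$. Localizing $Z$ at $\mathfrak{p}$ — the bracket localizes, the Azumaya property is preserved, and suitable localizations of $A'$, $M$, $M'$ survive because $h$ and the commutator on $A'$ are ``infinitesimal'' (indeed $h^2A'=0$) — we may assume $Z$ is regular local with maximal ideal $\mathfrak{m}=\rad\Ann_Z M$ and $M$ of finite length $\ell:=\length_Z M$. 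By hypothesis $\ell=bp^n$ with $(b,p)=1$, and the goal becomes $\{\mathfrak{m},\mathfrak{m}\}\subseteq\mathfrak{m}$. Assume not; then $\{f,g\}$ is a unit for some $f,g\in\mathfrak{m}$. Since $df\in\mathfrak{m}\,\Omega_{Z/k}$ whenever $f\in\mathfrak{m}^2$, a unit bracket forces $f,g\in\mathfrak{m}\setminus\mathfrak{m}^2$, so $f$ belongs to a regular system of parameters and the Hamiltonian derivation $\theta_f:=\{f,-\}$ of $Z$ sends $g$ to a unit. I will contradict this by showing $p\mid\ell$.

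\emph{Splitting off the Azumaya factor.} Replacing $Z$ by an \'etale local extension over which $A$ splits — which preserves lengths, hence the coprimality $(b,p)=1$, and for which the involutivity question descends — we may assume $A\cong M_r(Z)$ with $r=p^n$. Lifting the idempotent $E_{11}\in A$ to an idempotent $e\in A'$ (possible since $hA'$ is square-zero), the corner ring $\mathbf{Z}':=eA'e$ is a first-order deformation of $Z=eAe$; by the preceding lemma it induces the same bracket on $Z$, and $\bar M:=eM$, $\bar M':=eM'$ satisfy: $\bar M'$ is a first-order $\mathbf{Z}'$-deformation of $\bar M$ and $\length_Z\bar M=\ell/r=b$. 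We have thus reduced to the case $A=Z$ (equivalently $n=0$): a regular local ring $Z$, a first-order deformation $\mathbf{Z}'$ of $Z$ whose bracket takes a unit value $\{f,g\}$ on $\mathfrak{m}\times\mathfrak{m}$, and a finite-length $Z$-module $\bar M$ that deforms over $\mathbf{Z}'$; one must show $p\mid\length_Z\bar M$. (Alternatively one may skip this reduction and run Kaledin's argument for the Azumaya algebra directly, carrying the factor $r$ through.)

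\emph{The core step: Gabber's theorem in characteristic $p$.} This is precisely the characteristic-$p$ form of Gabber's theorem, and it is here that Kaledin's method does the work. The $\mathbf{Z}'$-module structure on $\bar M'$, together with $h^2\bar M'=0$ and a lift $\tilde f\in\mathbf{Z}'$ of $f$, endows $\bar M$ with a ``partial Hamiltonian flow along $\theta_f$'': because $\tilde f^{\,p}$ is central in $\mathbf{Z}'$ (the sum $\sum_i\tilde f^{\,i}[\tilde f,\tilde w]\tilde f^{\,p-1-i}$ reduces to $p\cdot hf^{p-1}\{f,w\}=0$), expressions like $\tfrac1h\bigl(\tilde f^{\,p}-\widetilde{f^{\,p}}\bigr)$ descend to $k$-linear endomorphisms of $\bar M$ whose commutators with multiplication by $z\in Z$ reproduce $\theta_f^{[p]}(z)$ modulo $\mathfrak{m}$ and implement the action of the Hamiltonian $p$-flow generated by $f$. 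Since $\theta_f(g)$ is a unit this flow is free and transverse to $\Spec Z/\mathfrak{m}$, and following $\bar M$ along it forces $p\mid\length_Z\bar M$. Equivalently, in Gabber's original bookkeeping: non-coisotropy of $V(\mathfrak{m})$ lets one build a good filtration on $\bar M$ whose Hilbert function would acquire positive degree, contradicting $\dim V(\mathfrak{m})=0$; in the present finite-length, characteristic-$p$ situation that contradiction takes the shape of the asserted $p$-divisibility.

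\emph{The main obstacle.} The real work is the last step — transcribing Kaledin's argument (Appendix~\ref{ref-B-10}) into the present framework — and two points demand care. First, the deformation parameter $h$ is ``arithmetic'' (e.g.\ $h=p$ over $W_2(k)$) rather than formal, so one must verify that Kaledin's constructions, which use only that $h$ is \emph{central} and \emph{module-square-zero}, carry over verbatim; this is exactly where the axioms $A'/hA'=A$, $\Ann_{A'}(h)=hA'$, $M'/hM'=M$, $\Ann_{M'}(h)=hM'$ (equivalently $h^2A'=0$ and $h^2M'=0$) are used. Second, one must keep the two powers of $p$ apart: the Azumaya rank contributes one factor $p^n$ to every multiplicity (absorbed by the Morita reduction), while non-coisotropy contributes one further factor $p$ (the content of the core step), and the hypothesis that the multiplicities have the form $bp^n$ with $(b,p)=1$ is calibrated precisely so that these two contributions cannot both occur — which is why it is the correct hypothesis for the conclusion.
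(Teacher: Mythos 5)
Your proposal is correct and follows essentially the same route as the paper: localize at the generic points of $V(I)$, split the Azumaya algebra (the paper completes $Z$ rather than passing to an \'etale extension, an immaterial difference), lift idempotents to Morita-reduce to a commutative first-order deformation with a module of length $b$ prime to $p$, and then invoke the characteristic-$p$ form of Gabber's theorem. The paper likewise defers that final step to Gabber's original argument and to the Kaledin-style proof in Appendix~\ref{ref-B-10} (which proceeds via obstruction classes and trace maps rather than the Hamiltonian-flow heuristic you sketch), so your treatment of the core step is no less complete than the paper's own.
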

\begin{proof} 
  The set of all lifts of the regular elements in $Z$ forms an Ore set in
  $A'$. Hence we may regard $A'$ as a sheaf on $\Spec Z$.  It suffices to
  prove the statement in the generic points of $V(I)$. Hence we may
  assume that $Z$ is local with maximal ideal~$m$ and $M$ has finite
length equal to $bp^n$ as in the statement of the proposition. 

From the conditions
  on the rank of $M$ we find that $A/mA$ is split. Without loss of
  generality we may replace $Z$ by its completion and then $A$ itself
  is split.  Thus $A=M_r(Z)$. By lifting idempotents we find
$A'=M_r(Z')$ where $Z'$ is a (normally non-commutative) 
first order deformation of $Z$ (note that we may assume $h\in Z'$)  and by the above lemma it induces the same
biderivation on $Z$ as~$A'$. 

We also have $M'=M_{r\times 1}(M_0')$, $M=M_{r\times 1}(M_0)$ where 
$M'_0$ is a first order deformation of $M_0$  and the latter is a
$Z$-module of finite length, equal to $b$.

Thus the length of $M_0$ is not divisible by $p$. To conclude we note
that Gabber's proof in \cite{Gabber} yields that $m$ is involutive
under these conditions. See \cite[p468, first display]{Gabber}.

 After some
adaptations Kaledin's proof of Gabber's theorem applies to our setting as well. This is
explained in Appendix~\ref{ref-B-10}. 
\end{proof}

\appendix
\section{Hochschild homology and the trace map}
\label{ref-A-4}  This
is a preparatory section for the next one in which we discuss
Kaledin's proof of Gabber's theorem.
In this section we discuss some properties of the trace map.
These
properties are well-known to experts but for
lack of a succinct reference we give proofs for some its properties here using
results in \cite{kellerexact}.

Let $K$ be a field.
If $\mathfrak{a}$ is a $K$-linear DG-category then we may define its
Hochschild homology $\HH_\ast(\mathfrak{a})$ by the usual standard
complex which we denote by $\CC_\ast(\mathfrak{a})$. It will be
convenient to think of $\CC_\ast(\mathfrak{a})$ as the $\oplus$-total complex of a bicomplex
$\CC_{\ast}^{\ast}(\mathfrak{a})$ such that
\[
\CC_{n}^m(\mathfrak{a})=\bigoplus_{A_0,\ldots,A_{n}\in \Ob(\mathfrak{a}),\sum_i m_i=m} 
\mathfrak{a}(A_{n-1},A_n)_{m_{n-1}}\otimes_K\cdots \otimes_K \mathfrak{a}(A_{0},A_1)_{m_0}
\]
with the standard differentials $d_{\operatorname{Hoch}}:\CC_{n}^m(\mathfrak{a})\r \CC_{n-1}^m(\mathfrak{a})$, $d_{\mathfrak{a}}:\CC_{n}^m(\mathfrak{a})\r\CC_{n}^{m+1}(\mathfrak{a}) $.

 Let $A\in\Ob( \mathfrak{a})$.  The inclusion $\mathfrak{a}(A,A)\r \CC^{\ast}_0(\mathfrak{a})$ defines a map of complexes
\[
\mathfrak{a}(A,A)\r \CC_\ast(\mathfrak{a})
\]
which on the level of cohomology defines the so-called \emph{trace} map
\[
\Tr_A:H^\ast(\mathfrak{a}(A,A))\r \HH_{-\ast}(\mathfrak{a})
\]
Let $\Mod(\mathfrak{a})$ be the category of right $\mathfrak{a}$-modules\footnote{Right $\mathfrak{a}$-modules
are contravariant functors $M:\mathfrak{a}\r \operatorname{Ab}$. In other words a collection of
abelian groups $M(A)$, $A\in\mathfrak{a}$ which depends contravariantly on $A$.}. 
We will identify $\mathfrak{a}$ with a full subcategory of $\Mod(\mathfrak{a})$
through the Yoneda embedding. Let $\tilde{\mathfrak{a}}$ be the category
of perfect DG-modules in $\Mod(\mathfrak{a})$. By \cite[Thm 2.4b)]{kellerexact}
the functor $\mathfrak{a}\r \tilde{\mathfrak{a}}$ induces an isomorphism $\HH_\ast({\mathfrak{a}})\cong\HH_\ast(\tilde{\mathfrak{a}})$.  
For $A\in\Ob(\tilde{\mathfrak{a}})$ we define the corresponding trace map as the composition
\[
\Tr_A:\Ext^\ast_{\mathfrak{a}}(A,A)=H^\ast(\tilde{\mathfrak{a}}(A,A))\r \HH_{-\ast}(\tilde{\mathfrak{a}})\cong
 \HH_{-\ast}(\mathfrak{a})
\]
The following properties follow directly from the definition
\begin{lemma} (Functoriality) If $F:\mathfrak{a}\r \mathfrak{b}$ is
a DG-functor then the following diagram is commutative
\[
\xymatrix{
\Ext^\ast_{\mathfrak{a}}(A,A)\ar[r]^{\Tr_A}\ar[d] & \HH_{-\ast}(\mathfrak{a})\ar[d]\\
\Ext^\ast_{\mathfrak{b}}(FA,FB)\ar[r]_{\Tr_{FA}} & \HH_{-\ast}(\mathfrak{b})
}
\]
\end{lemma}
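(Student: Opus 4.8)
The plan is to check the square commutes already on the level of Hochschild chain complexes for $A\in\Ob(\mathfrak a)$, and then to upgrade to arbitrary $A\in\Ob(\tilde{\mathfrak a})$ by functoriality and naturality.

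\emph{Step 1 (the chain-level square).} A DG-functor $F:\mathfrak a\r\mathfrak b$ induces a morphism of bicomplexes $\CC(F):\CC_{\ast}^{\ast}(\mathfrak a)\r\CC_{\ast}^{\ast}(\mathfrak b)$, $f_{n-1}\otimes_K\cdots\otimes_K f_0\mapsto F(f_{n-1})\otimes_K\cdots\otimes_K F(f_0)$; it is a morphism of complexes in the $d_{\mathfrak a}$-direction because $F$ is a DG-functor, and in the $d_{\operatorname{Hoch}}$-direction because $F$ preserves composition and identities. On $\oplus$-total complexes $\CC(F)$ induces, by definition, the vertical map $\HH_{-\ast}(\mathfrak a)\r\HH_{-\ast}(\mathfrak b)$ of the statement. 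Now for $A\in\Ob(\mathfrak a)$ the map $\mathfrak a(A,A)\r\CC_\ast(\mathfrak a)$ defining $\Tr_A$ is the inclusion of the summand of $\CC^{\ast}_0(\mathfrak a)$ indexed by $A$ followed by the inclusion of the $n=0$ column, and $\CC(F)$ carries that summand, via $F$, into the summand of $\CC^{\ast}_0(\mathfrak b)$ indexed by $FA$. Hence the square
\[
\xymatrix{
\mathfrak a(A,A)\ar[r]\ar[d]_{F} & \CC_\ast(\mathfrak a)\ar[d]^{\CC(F)}\\
\mathfrak b(FA,FA)\ar[r] & \CC_\ast(\mathfrak b)
}
\]
commutes in the category of complexes, and applying $H^\ast$ gives the lemma for $A\in\Ob(\mathfrak a)$, the left-hand vertical of the statement being by definition the map induced by $F$ on Hom-complexes.

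\emph{Step 2 (passage to $\tilde{\mathfrak a}$).} For $A\in\Ob(\tilde{\mathfrak a})$ the trace is built from the isomorphism $\HH_\ast(\mathfrak a)\cong\HH_\ast(\tilde{\mathfrak a})$ of \cite[Thm 2.4b)]{kellerexact}, which is induced by the Yoneda embedding $\mathfrak a\r\tilde{\mathfrak a}$ and is therefore natural in $\mathfrak a$. The DG-functor $F$ extends to a DG-functor $\tilde F:\tilde{\mathfrak a}\r\tilde{\mathfrak b}$ given by the induction functor $-\otimes_{\mathfrak a}\mathfrak b:\Mod(\mathfrak a)\r\Mod(\mathfrak b)$ (left adjoint to restriction along $F$): it commutes with shifts, cones and direct summands and sends the representable $\mathfrak a(-,A)$ to $\mathfrak b(-,FA)$, hence preserves perfect DG-modules and restricts to $F$ along the two Yoneda embeddings. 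Naturality of Keller's isomorphism then gives a commutative square relating the vertical map $\HH_{-\ast}(\mathfrak a)\r\HH_{-\ast}(\mathfrak b)$ induced by $F$ to the vertical map $\HH_{-\ast}(\tilde{\mathfrak a})\r\HH_{-\ast}(\tilde{\mathfrak b})$ induced by $\tilde F$.

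\emph{Step 3 (conclusion).} Apply Step 1 to the DG-functor $\tilde F:\tilde{\mathfrak a}\r\tilde{\mathfrak b}$: this yields commutativity of the square whose horizontal arrows are $\Tr_A:H^\ast(\tilde{\mathfrak a}(A,A))\r\HH_{-\ast}(\tilde{\mathfrak a})$ and $\Tr_{\tilde FA}$, and whose left vertical is $\tilde F$ on $\tilde{\mathfrak a}(A,A)$. Gluing this square to the naturality square of Step 2 along the common edge $\HH_{-\ast}(\tilde{\mathfrak a})\r\HH_{-\ast}(\tilde{\mathfrak b})$ produces exactly the diagram of the lemma, using that $\tilde FA=FA$ when $A\in\Ob(\mathfrak a)$ and, in general, that the vertical maps of the statement are those induced by $\tilde F$ on Hom-complexes. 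The only point that warrants a word of justification is the construction of $\tilde F$ together with the naturality of Keller's quasi-isomorphism; both are formal, so I expect no genuine obstacle, Step 1 being a straightforward unwinding of definitions.
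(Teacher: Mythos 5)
Your argument is correct and is exactly the unwinding of the definition that the paper intends: the lemma is stated there with no proof beyond ``follow directly from the definition,'' and the chain-level square of your Step 1 is the whole content. Your Steps 2--3 (extending $F$ to the perfect modules via induction and invoking naturality of Keller's isomorphism $\HH_\ast(\mathfrak{a})\cong\HH_\ast(\tilde{\mathfrak{a}})$) correctly supply the only point the terse statement glosses over, namely the case $A\in\Ob(\tilde{\mathfrak{a}})$.
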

\begin{lemma} (Symmetry) If $f\in\Ext^i_{\mathfrak{a}}(A,B)$, $g\in \Ext^j_{\mathfrak{a}}(B,A)$ then
\[
\Tr_B(fg)=(-1)^{ij}\Tr_B(gf)
\]
in $\HH_\ast(\mathfrak{a})$.
\end{lemma}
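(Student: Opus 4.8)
The plan is to realize both sides of the asserted identity as homology classes of explicit cycles in the Hochschild complex and to join them by a single $1$-chain. Since $\Ext^\ast_{\mathfrak{a}}(-,-)=H^\ast(\tilde{\mathfrak{a}}(-,-))$ and the trace for objects of $\tilde{\mathfrak{a}}$ was defined through the category $\tilde{\mathfrak{a}}$ together with the fixed isomorphism $\HH_\ast(\tilde{\mathfrak{a}})\cong\HH_\ast(\mathfrak{a})$, it suffices to prove the statement with $\mathfrak{a}$ replaced by $\tilde{\mathfrak{a}}$; after renaming we may assume $A,B\in\Ob(\mathfrak{a})$ and choose $d_{\mathfrak{a}}$-closed representatives $\tilde f\in\mathfrak{a}(A,B)_i$ and $\tilde g\in\mathfrak{a}(B,A)_j$ of the classes $f$ and $g$. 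Then the composites $\tilde f\tilde g\in\mathfrak{a}(B,B)_{i+j}$ and $\tilde g\tilde f\in\mathfrak{a}(A,A)_{i+j}$ are again $d_{\mathfrak{a}}$-closed; since the Hochschild differential vanishes on $\CC_0$, they are cycles of the total complex $\CC_\ast(\mathfrak{a})$ lying in $\CC_0^{i+j}(\mathfrak{a})$, and by construction $\Tr_B(fg)$ is the homology class of $\tilde f\tilde g$, while $\Tr_A(gf)$ (with $gf\in\Ext^{i+j}_{\mathfrak{a}}(A,A)$) is that of $\tilde g\tilde f$.

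Next I would form the $1$-chain $c:=\tilde g\otimes\tilde f$ in the summand of $\CC_1^{i+j}(\mathfrak{a})$ indexed by $A_0=A$, $A_1=B$, with $\tilde g\in\mathfrak{a}(B,A)$ the ``outer'' tensor factor and $\tilde f\in\mathfrak{a}(A,B)$ the ``inner'' one, and apply the total differential $d_{\operatorname{Hoch}}\pm d_{\mathfrak{a}}$ to it. Because $\tilde f$ and $\tilde g$ are $d_{\mathfrak{a}}$-closed, the graded Leibniz rule gives $d_{\mathfrak{a}}(c)=0$, so the total differential of $c$ reduces to $d_{\operatorname{Hoch}}(c)$; unwinding the definition of the Hochschild differential on $\CC_1$ --- namely its two face maps, the second of which transposes the two tensor factors before composing them and therefore carries the Koszul sign $(-1)^{|\tilde f||\tilde g|}$ --- one finds
\[
d_{\operatorname{Hoch}}(\tilde g\otimes\tilde f)=\pm\bigl(\tilde g\tilde f-(-1)^{ij}\tilde f\tilde g\bigr)
\]
in $\CC_0^{i+j}(\mathfrak{a})$, the first term lying in the summand $\mathfrak{a}(A,A)$ and the second in $\mathfrak{a}(B,B)$. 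Hence $\tilde f\tilde g$ and $(-1)^{ij}\tilde g\tilde f$ are homologous cycles in $\CC_\ast(\mathfrak{a})$, and passing to $\HH_{-\ast}(\mathfrak{a})$ yields $\Tr_B(fg)=(-1)^{ij}\Tr_A(gf)$, as claimed.

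I expect the only genuine difficulty to be the signs: one must check that the sign acquired when the last face map of the cyclic bar complex moves $\tilde g$ past $\tilde f$ is exactly $(-1)^{ij}$ --- in the graded Koszul convention this is forced, but it is precisely the content of the lemma --- and that the residual overall sign, which depends on the normalization of $d_{\operatorname{Hoch}}$ and on the sign rule for the $\oplus$-total complex of $\CC^{\ast}_{\ast}$, is immaterial, since it multiplies both homologous cycles alike. A minor subsidiary point is the use of the signed Leibniz rule for $d_{\mathfrak{a}}$ on $\tilde g\otimes\tilde f$, which is immediate once a sign convention has been fixed. No further ingredient is needed; in particular the functoriality lemma plays no role here, since both traces in the statement are computed within the single category $\tilde{\mathfrak{a}}$.
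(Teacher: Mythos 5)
Your argument is correct and is precisely the direct verification the paper intends: the Symmetry lemma is stated as one of the properties that ``follow directly from the definition,'' and exhibiting $\tilde g\otimes\tilde f\in \CC_1$ as the $1$-chain whose Hochschild boundary is $\tilde g\tilde f-(-1)^{ij}\tilde f\tilde g$ is exactly that verification. Note in passing that the right-hand side of the statement should read $\Tr_A(gf)$ rather than $\Tr_B(gf)$ (since $gf\in\Ext^{i+j}_{\mathfrak{a}}(A,A)$), which is the version you in fact prove.
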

From the previous lemma one obtains
\begin{lemma} (Naturality)
If $f\in \Hom_{\mathfrak{a}}(A,B)$ is invertible and $g\in \Ext^\ast_{\mathfrak{a}}(A,A)$ then
\[
\Tr_B(fgf^{-1})=\Tr_A(g)
\]
In other words:
up to the natural identification of $\Ext^\ast_{\mathfrak{a}}(A,A)$
and $\Ext^\ast_{\mathfrak{a}}(B,B)$
the traces $\Tr_A$, $\Tr_B$ coincide.
\end{lemma}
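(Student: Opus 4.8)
The plan is to deduce this statement directly from the Symmetry lemma; the only points requiring attention are keeping track of the object at which each trace is based and checking that the sign in Symmetry is trivial here. First I would record that $fgf^{-1}$ is by definition the composite $f\circ g\circ f^{-1}\colon B\to B$ (apply $f^{-1}$, then $g$, then $f$), so it is an element of $\Ext^\ast_{\mathfrak a}(B,B)$ and the left-hand side $\Tr_B(fgf^{-1})$ is meaningful.

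Next I would regroup the composite so that the Symmetry lemma becomes applicable. Put $u=f\in\Hom_{\mathfrak a}(A,B)=\Ext^0_{\mathfrak a}(A,B)$ and $v=g\circ f^{-1}\in\Ext^\ast_{\mathfrak a}(B,A)$. Then $u\circ v=fgf^{-1}$, whereas $v\circ u=g\circ f^{-1}\circ f=g$ since $f^{-1}\circ f=\id_A$. Applying the Symmetry lemma to $u$ and $v$ gives $\Tr_B(u\circ v)=(-1)^{0\cdot\deg v}\,\Tr_A(v\circ u)=\Tr_A(v\circ u)$, i.e. $\Tr_B(fgf^{-1})=\Tr_A(g)$, which is the assertion. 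For the reformulation it then suffices to observe that $g\mapsto fgf^{-1}$ is exactly the isomorphism $\Ext^\ast_{\mathfrak a}(A,A)\to\Ext^\ast_{\mathfrak a}(B,B)$ induced by the isomorphism $f$, so the displayed identity says precisely that $\Tr_A$ and $\Tr_B$ are intertwined by this isomorphism.

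I do not expect any real obstacle here: as the paper's phrasing already indicates, this is a formal corollary of Symmetry. The one place to be slightly careful is the sign, and it disappears because $f$ (hence $f^{-1}$) has degree $0$; had I grouped the composite instead as $(f\circ g)\circ f^{-1}$, the degree-$0$ factor $f^{-1}$ would again trivialize the sign and give the same conclusion, so the grouping is immaterial.
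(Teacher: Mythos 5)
Your proof is correct and is exactly the argument the paper intends: the paper derives Naturality "from the previous lemma" (Symmetry), and your decomposition $fgf^{-1}=u\circ v$ with $u=f$, $v=gf^{-1}$ and the observation that the sign vanishes because $|f|=0$ is precisely that deduction, spelled out. (You also implicitly correct a typo in the paper's statement of Symmetry, where the right-hand trace should be based at $A$, not $B$.)
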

Naturality implies that we may define the trace map on
compact objects $C$ in $D(\mathfrak{a})$. We take an object $\tilde{C}$  in $\tilde{\mathfrak{a}}$
representing $C$ and put $\Tr_C=\Tr_{\tilde{C}}$. By naturality this yields a well
defined map
\[
\Ext^\ast_\mathfrak{a}(C,C)\r \HH_{-\ast}(\mathfrak{a})
\]
Now we come to the additivity of traces. This is a slightly subtle problem. See \cite{Ferrand}.
\begin{lemma} (Additivity)
Assume that we have a commutative diagram
\begin{equation}
\label{ref-A.4-9a}
\xymatrix{
A\ar[r]^u\ar[d]_f &B\ar[d]^g\\
A\ar[r]_u & B
}
\end{equation}
of closed maps in $\tilde{\mathfrak{a}}$ with $|u|=0$. Let $C=\cone u$ and let  $h:C\r C$ be the
morphism obtained from $f,g$ by functoriality of cones in DG-categories. 
Then
\[
\Tr_B(g)=\Tr_A(f)+\Tr_C(h)
\]
\end{lemma}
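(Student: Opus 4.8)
The plan is to reduce the additivity of traces to a computation in the Hochschild complex that is essentially the standard proof that the Euler characteristic is additive in triangles, being careful about signs and about the DG-enhancement of cones.

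First I would fix an explicit model for $C = \cone u$. Since $u:A\to B$ is a closed degree-$0$ map in $\tilde{\mathfrak a}$, the cone is the DG-module $C = A[1]\oplus B$ with differential $\begin{pmatrix} -d_A & 0 \\ u & d_B\end{pmatrix}$, so that $C$ sits in a termwise-split short exact sequence $0\to B \xrightarrow{\iota} C \xrightarrow{\pi} A[1]\to 0$. The morphism $h:C\to C$ induced by the commutative square \eqref{ref-A.4-9a} is, in this model, the diagonal map $\begin{pmatrix} f[1] & 0\\ 0 & g\end{pmatrix}$ — there is no off-diagonal correction term precisely because the square commutes strictly. (If it commuted only up to a chosen homotopy, $h$ would acquire an off-diagonal entry built from that homotopy, and that is the case one wants to avoid here.) So $h$ is a closed endomorphism of $C$, and in the endomorphism algebra $\tilde{\mathfrak a}(C,C)$ it decomposes as $h = \iota g \pi' + \iota' f[1] \pi$ where $\iota',\pi'$ are the (non-closed, but degree-$0$) splitting maps with $\pi\iota' = \id_{A[1]}$, $\pi'\iota = \id_B$, $\iota\pi' + \iota'\pi = \id_C$.

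Next I would feed these splittings into the Hochschild complex. The trace $\Tr_C(h)$ is the class of the $0$-chain $[h]\in \CC_0(\tilde{\mathfrak a})$ sitting in $\tilde{\mathfrak a}(C,C)$. Using the Yoneda/perfect-module picture one may compute $\Tr_C$ either in $\tilde{\mathfrak a}$ or, via the isomorphism $\HH_\ast(\tilde{\mathfrak a})\cong\HH_\ast(\mathfrak a)$, after pushing everything into a category that already contains $C$. Writing $h = \iota g \pi' + \iota' (f[1]) \pi$ and applying the Symmetry lemma to move the outer factors around the tensor-cyclically — $\Tr_C(\iota g \pi')$ becomes, up to the Koszul sign, a trace of $g \pi'\iota = g$ computed on $B$, and $\Tr_C(\iota'(f[1])\pi)$ becomes a trace of $(f[1])\pi\iota' = f[1]$ computed on $A[1]$ — I get
\[
\Tr_C(h) = \Tr_B(g) + \Tr_{A[1]}(f[1]).
\]
Here the potential extra signs from Symmetry cancel: $\iota$ and $\pi'$ both have degree $0$, and likewise $\iota'$ and $\pi$, so $(-1)^{ij}=1$ in both applications. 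The one remaining ingredient is the identity $\Tr_{A[1]}(f[1]) = -\Tr_A(f)$, i.e. that shifting an object negates its trace; this is the usual sign rule for the Euler class under suspension and follows from unwinding the definition of the trace map together with the Koszul sign convention built into the total complex $\CC_\ast$. Rearranging gives $\Tr_B(g) = \Tr_A(f) + \Tr_C(h)$, as claimed.

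The main obstacle — and the reason Ferrand is cited — is bookkeeping rather than conceptual: one must verify that the splitting maps $\iota',\pi'$, which are \emph{not} closed, can nonetheless be used inside the Symmetry lemma, and that all the boundary terms $d_{\operatorname{Hoch}}(\cdots)$ and $d_{\mathfrak a}(\cdots)$ produced when commuting non-closed elements past one another assemble into an exact chain, so that the displayed equality holds on the nose in homology $\HH_\ast(\mathfrak a)$ rather than only up to an unidentified correction. The cleanest way to organize this is to work with the explicit bicomplex $\CC^\ast_\bullet$ and the formula $\iota\pi' + \iota'\pi = \id_C$: substituting this decomposition of the identity and expanding $h\,\id_C$ inside $\CC_0$, the cross terms $\iota g\pi'\cdot\iota'\pi$ and $\iota'(f[1])\pi\cdot\iota\pi'$ vanish because $\pi'\iota' = 0$ and $\pi\iota = 0$, while the surviving diagonal terms are exactly the two traces above after one cyclic rotation whose error term is a Hochschild boundary. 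I expect this last verification to be the only place real care is needed; everything else is formal.
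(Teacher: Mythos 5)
Your route is genuinely different from the one in the paper. The paper avoids all chain-level computation: it forms the DG-category $\mathfrak{b}$ of triples $(A,B,u)$, observes that it has a semi-orthogonal decomposition $(\tilde{\mathfrak{a}},\tilde{\mathfrak{a}})$ so that Keller's theorem gives $\HH_\ast(\mathfrak{b})\cong \HH_\ast(\mathfrak{a})\oplus\HH_\ast(\mathfrak{a})$, checks the identity $\HH_\ast(\pi_2)=\HH_\ast(\pi_1)+\HH_\ast(\pi_3)$ for the three evaluation functors, and then gets additivity purely from functoriality of the trace applied to the morphism $(f,g)$ of the object $u$ of $\mathfrak{b}$. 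Your proposal instead works with the explicit cone $C=A[1]\oplus B$ and tries to verify directly in $\CC_\ast(\tilde{\mathfrak{a}})$ that $[h]-[g]-[f[1]]$ is a boundary. That strategy can be made to work and is the classical "Lefschetz trace is additive" computation, but it trades the conceptual input (Keller's localization theorem) for precisely the sign and boundary bookkeeping that the paper's argument is designed to bypass.

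As written, however, your proof has a genuine gap at its central step. The Symmetry lemma applies to classes $f\in\Ext^i_{\mathfrak{a}}(A,B)$, $g\in\Ext^j_{\mathfrak{a}}(B,A)$, i.e.\ to \emph{closed} morphisms; your splittings $\iota'$ and $\pi'$ are not closed (one has $d\iota'=\iota u$ and $d\pi'=\pm u\pi$ up to signs), so the summands $\iota g\pi'$ and $\iota' (f[1])\pi$ are not closed endomorphisms of $C$ and do not individually define Ext classes --- only their sum $h$ does. Hence "$\Tr_C(\iota g\pi')$" is not a homology class and the cyclic rotation cannot be delegated to the Symmetry lemma; it must be performed at the chain level in the total complex of $\CC^\ast_\ast(\tilde{\mathfrak{a}})$, where rotating a non-closed tensor $x\otimes y$ produces, besides the commutator $xy\mp yx$, the terms $dx\otimes y$ and $x\otimes dy$ living in Hochschild degree one. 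One must exhibit an explicit total chain (with components in several bidegrees, built from $\iota,\pi,\iota',\pi'$ and $u$) whose boundary is exactly $[h]-[g]-[f[1]]$, checking that all these degree-one correction terms cancel. You acknowledge this ("I expect this last verification to be the only place real care is needed") but do not carry it out, and since this verification \emph{is} the content of the lemma --- it is exactly where the Ferrand-type failure of naive additivity would otherwise enter --- the proof is incomplete without it. The remaining ingredients (the strict commutativity giving $h=\mathrm{diag}(f[1],g)$ with no off-diagonal term, the identity $\Tr_{A[1]}(f[1])=-\Tr_A(f)$, and the final rearrangement of signs) are correct.
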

\begin{proof}
Let $\mathfrak{b}$ be the category consisting of triples $(A,B,u)$
with $A,B\in\tilde{\mathfrak{a}}$ and $u:A\r B$ a closed map of
degree zero.  It is easy to see that $\mathfrak{b}$ has a semi-orthogonal
decomposition $(\tilde{\mathfrak{a}},\tilde{\mathfrak{a}})$ and hence by \cite[Thm 2.4b,c]{kellerexact}
we have 
\begin{equation}
\label{ref-A.2-6}
\HH_\ast(\mathfrak{b})=\HH_\ast(\tilde{\mathfrak{a}})\oplus \HH_\ast(\tilde{\mathfrak{a}})=\HH_\ast(\mathfrak{a})\oplus \HH_\ast(\mathfrak{a})
\end{equation}
There are three canonical functors $\pi_{1,2,3}:\mathfrak{b}\r \tilde{\mathfrak{a}}$ which send $(A,B,u)$ respectively to $A,B$ and $\cone u$. Using \eqref{ref-A.2-6} one easily checks that 
\begin{equation}
\label{ref-A.3-7}
\HH_\ast(\pi_2)=\HH_\ast(\pi_1)+\HH_\ast(\pi_3)
\end{equation}
Now \eqref{ref-A.4-9a} may be viewed as a morphism in $\mathfrak{b}$ which
we denote by $(f,g)$. By functoriality we have
\begin{align*}
\HH_\ast(\pi_1)(\Tr_u(f,g))&=\Tr_A(f)\\
\HH_\ast(\pi_2)(\Tr_u(f,g))&=\Tr_B(g)\\
\HH_\ast(\pi_3)(\Tr_u(f,g))&=\Tr_C(h)
\end{align*}
From \eqref{ref-A.3-7}  we then obtain what
we want.
\end{proof}
We  actually use the following variant of additivity.
\begin{lemma} (Additivity)
\label{ref-A.5-8}
Assume that we have a commutative diagram
\begin{equation}
\label{ref-A.4-9}
\xymatrix{
A\ar[r]^u\ar[d]_f &B\ar[d]^g\\
A\ar[r]_u & B
}
\end{equation}
of closed maps in $\tilde{\mathfrak{a}}$ with $|u|=0$. Assume that
$u$ is injective and $C=\coker u$ is perfect. Let $h:C\r C$ be
obtained by functoriality of cokernels. Then
\[
\Tr_B(g)=\Tr_A(f)+\Tr_C(h)
\]
\end{lemma}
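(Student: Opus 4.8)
The plan is to deduce this variant from the preceding Additivity lemma by trading the cokernel of $u$ for the cone of $u$; the two extra hypotheses ($u$ injective, $C$ perfect) are exactly what make that trade harmless.

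First I would use the elementary observation that for an injective closed degree-zero map $u\colon A\r B$ the canonical projection
\[
q\colon\cone(u)=B\oplus A[1]\longrightarrow C=\coker(u),\qquad (b,a)\longmapsto \bar b,
\]
is a closed degree-zero morphism in $\tilde{\mathfrak{a}}$ whose kernel, via $a'\mapsto u(a')$, is isomorphic to $\cone(\id_A)$ and hence acyclic; thus $q$ is a quasi-isomorphism. Now $\cone(u)$ is perfect because $A$ and $B$ are and $\tilde{\mathfrak{a}}$ is closed under cones, while $C$ is perfect by hypothesis; since a quasi-isomorphism between perfect (in particular homotopy-projective) DG-modules is a homotopy equivalence, $q$ becomes invertible in $H^0(\tilde{\mathfrak{a}})$, with some homotopy inverse $q'$.

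Next I would compare the two induced endomorphisms. Write $h'\colon\cone(u)\r\cone(u)$ for the endomorphism produced from the square \eqref{ref-A.4-9} by functoriality of cones (this is the ``$h$'' of the preceding lemma), and keep $h\colon C\r C$ for the one produced by functoriality of cokernels. Reading off the definitions — $h'$ is $(b,a)\mapsto(gb,fa)$ and $h$ is $\bar b\mapsto\overline{gb}$ — one checks at once that $q\circ h'=h\circ q$ as closed maps. The preceding Additivity lemma applied to $u$ already yields
\[
\Tr_B(g)=\Tr_A(f)+\Tr_{\cone(u)}(h'),
\]
so it remains only to identify $\Tr_{\cone(u)}(h')$ with $\Tr_C(h)$. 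Since $[q]$ is an isomorphism in $H^0(\tilde{\mathfrak{a}})$ intertwining $[h']$ and $[h]$, this is an instance of the naturality of $\Tr$ discussed above; concretely, $h\simeq qh'q'$, hence $\Tr_C(h)=\Tr_C\bigl((qh')q'\bigr)$, which by the Symmetry lemma (all maps have degree $0$) equals $\Tr_{\cone(u)}\bigl(q'(qh')\bigr)=\Tr_{\cone(u)}(q'q\,h')=\Tr_{\cone(u)}(h')$, the last step because $q'q\simeq\id$ and $\Tr$ depends only on cohomology classes. Combining the two displays proves the lemma.

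The only step that is not purely formal is the claim that $q$ is a homotopy equivalence, and that is precisely where the hypothesis that $C=\coker(u)$ be perfect is used; everything else is bookkeeping with the two functorialities and with the formal properties of $\Tr$ established earlier in this appendix. (Alternatively one could rerun the semi-orthogonal-decomposition argument of the previous proof for the DG-category of short exact sequences $0\r A\r B\r C\r 0$ with $C$ perfect, but the reduction to the cone is shorter.)
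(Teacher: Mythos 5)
Your argument is correct and is essentially the paper's own proof: both reduce to the previous additivity lemma via the natural projection $\cone u\r\coker u$, which under the stated hypotheses is a homotopy equivalence compatible with the induced endomorphisms, and then conclude by naturality of the trace. You have merely spelled out the details (acyclicity of the kernel of the projection, and the symmetry argument behind naturality) that the paper leaves implicit.
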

\begin{proof}
This follows easily from the fact that we have a natural commutative diagram
\[
\xymatrix{
&&\cone u\ar[dd]\\
A\ar[r]_u &B\ar[dr]\ar[ru]&\\
&&C
}
\]
where the vertical map is a homotopy equivalence, together with naturality.
\end{proof}
\begin{remark} Additivity holds in fact for  morphisms
of distinguished triangles in Neeman's alternative definition of the derived category. See \cite{Neeman10}.
\end{remark}
\begin{remark} A natural way to formulate additivity is via the filtered derived category. See \cite[V3.7.7, p.310]{Illusie1}.
\end{remark}
\section{Gabber's theorem}
\label{ref-B-10}
The following result is a slight generalization of the main result
of \cite{Kaledin1}.
\begin{proposition}
\label{ref-B.1-11} Consider the following data
\begin{itemize}
\item
$A$ is an equicharacteristic regular local ring with maximal ideal $m$
and residue field $K$ of characteristic $p$.
\item $A'$ is a first order deformation of $A$.
\item $M$ is a finite length $A$-module such that there exists a first order $A'$-defor\-mation $M'$ of $M$.
\item The characteristic of $K$ is either zero or prime to the length of $M$.
\item
$\{-,-\}$ is the induced  bracket on $A$.
\end{itemize}
With these notations we have that $m$ is involutive. I.e.\ $\{m,m\}\subset m$.
\end{proposition}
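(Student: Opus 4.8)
The plan is to follow Kaledin's argument from \cite{Kaledin1}, adapted to the present setting; the key point is that that argument uses the deformation $A'$ only through its class in Hochschild cohomology, so almost nothing changes. Since involutivity of $m$ is unaffected by replacing $A$ by its completion (and $A'$, $M'$ lift accordingly), I would first reduce to $A=K[[x_1,\dots,x_n]]$. As a flat square-zero extension of $A$ by the bimodule $hA'\cong A$, the algebra $A'$ is classified by a class $\theta\in\HH^2(A)$, and under the Hochschild--Kostant--Rosenberg isomorphism $\HH^2(A)\cong\bigwedge^2_A\Der_K(A)$ this $\theta$ corresponds, up to sign, to the bivector $\pi$ with $\pi(df,dg)=\{f,g\}$, the bracket of \S\ref{ref-4-2}. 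Hence the conclusion $\{m,m\}\subset m$ is equivalent to the vanishing of $\pi$ at the closed point, that is, $\pi\in m\cdot\bigwedge^2_A\Der_K(A)$.

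Next I would use that the obstruction to lifting $M$ to a $K[h]/h^2$-flat $A'$-module $M'$ is the image of $\theta$ under the characteristic morphism $\HH^2(A)\to\Ext^2_A(M,M)$, equivalently $\theta\cup\id_M$; since $M'$ exists by hypothesis, this class vanishes. I would then apply the trace formalism of Appendix~\ref{ref-A-4} --- not to $\Perf(A)$, where $\Tr_M(\id_M)\in\HH_0(A)=A$ is zero because $M$ is a torsion complex, but to a DG enhancement $\mathfrak a$ of the category $\Perf_{\{m\}}(A)$ of perfect complexes supported at the closed point ($M$ is such a complex since $A$ is regular and $M$ has finite length). As $\Perf_{\{m\}}(A)$ is a module category over $\Perf(A)$, the ring $\HH^\ast(A)$ acts on $\HH_\ast(\mathfrak a)=:\HH^{\{m\}}_\ast(A)$ by cap product and on each $\Ext^\ast_A(C,C)$ by the characteristic morphism, and one checks, using Appendix~\ref{ref-A-4}, that $\Tr_C\colon\Ext^\ast_A(C,C)\to\HH^{\{m\}}_{-\ast}(A)$ is $\HH^\ast(A)$-linear. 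Applying $\Tr_M$ to the vanishing obstruction then gives
\[
\theta\cap\Tr_M(\id_M)=\Tr_M(\theta\cup\id_M)=0\qquad\text{in }\HH^{\{m\}}_{-2}(A).
\]

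It then remains to unwind both sides. Filtering $M$ by $\ell(M)$ copies of $A/m$ and using additivity of the trace (Lemma~\ref{ref-A.5-8}) gives $\Tr_M(\id_M)=\ell(M)\cdot\Tr_{A/m}(\id_{A/m})$. By HKR and the local-cohomology triangle, $\HH^{\{m\}}_0(A)\cong H^n_m(\Omega^n_{A/K})$ and $\HH^{\{m\}}_{-2}(A)\cong H^n_m(\Omega^{n-2}_{A/K})$; since $\Tr_{A/m}(\id_{A/m})$ is killed by $m$, it is a scalar multiple $c\cdot e$ of the canonical generator $e=[(dx_1\wedge\cdots\wedge dx_n)/(x_1\cdots x_n)]$ of the one-dimensional socle of $H^n_m(\Omega^n_{A/K})$, and non-degeneracy of the local (residue) duality pairing forces $c\in K^\times$. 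By the hypothesis on the characteristic, $\ell(M)$ is invertible in $K$ --- this is the only point at which the characteristic enters --- so $\ell(M)c$ is a unit of $A$ and we conclude $\theta\cap e=0$. Finally, under HKR the operator $\theta\cap(-)$ is contraction $\iota_\pi$ by the bivector, and a direct \v{C}ech computation in $H^n_m$ shows that $\iota_\pi(e)=0$ exactly when every coefficient $\{x_i,x_j\}$ of $\pi$ lies in $m$, which by the first step is precisely $\{m,m\}\subset m$.

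I expect the main obstacle to be the second step: setting up the trace maps of Appendix~\ref{ref-A-4} for the category $\Perf_{\{m\}}(A)$ and verifying the two inputs used there --- that the deformation obstruction of $M$ is $\theta\cup\id_M$, and that $\Tr$ intertwines the $\HH^\ast(A)$-action on $\Ext^\ast_A(M,M)$ with the cap product on $\HH^{\{m\}}_{-\ast}(A)$ --- together with the local-duality fact that the Chern character $\Tr_{A/m}(\id_{A/m})$ is a non-zero multiple of $e$. Once these are in place the remaining HKR and residue bookkeeping is routine, and because Kaledin's proof invokes $A'$ only through $\theta$, essentially no modification is needed to pass from his setting (the filtered/Rees situation) to an arbitrary first order deformation.
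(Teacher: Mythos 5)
Your overall skeleton (trace of the deformation obstruction, additivity over a composition series to reduce to the residue field, invertibility of $\length M$, and non-vanishing of the trace of the identity of $K$) is the right one and matches the paper's. But your opening move --- ``$A'$ is classified by a class $\theta\in\HH^2(A)$, which under HKR corresponds to the bivector $\pi$'' --- is exactly the step that does \emph{not} survive in the generality of the proposition, and the rest of your argument (the obstruction as $\theta\cup\id_M$, the $\HH^\ast(A)$-linearity of $\Tr$, the computation of $\theta\cap e$ as $\iota_\pi(e)$) all feeds off $\theta$. A first order deformation here means only that $A'$ has a central element $h$ with $A'/hA'=A$ and $\Ann_{A'}(h)=hA'$; it is \emph{not} assumed that $A'$ is a $K$-algebra, nor flat over $K[h]/h^2$. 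Square-zero extensions of $A$ by the bimodule $A$ are classified by $\HH^2_K(A)$ only when they are extensions \emph{of $K$-algebras}. In the intended application $A'$ comes from reduction modulo $p^2$ (with $h=p$), so it contains no copy of $K$ at all; already $A=\mathbb{F}_p$, $A'=\ZZ/p^2\ZZ$ is a first order deformation that is certainly not classified by $\HH^2_{\mathbb{F}_p}(\mathbb{F}_p)=0$. This is precisely the point the paper flags when it says that the Hochschild-cohomology part of Kaledin's argument ``does not generalize to our more general setting''; your closing remark that ``Kaledin's proof invokes $A'$ only through $\theta$, so essentially no modification is needed'' has it backwards.

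The paper's way around this is to never form $\theta$: the obstruction $o_S\in\Ext^2_{\Cscr}(S,S)$ is defined directly by lifting a projective resolution $(P_S,d)$ of $S$ to $A'$ and setting $ho_S=(d')^2$, which makes sense for a purely ring-theoretic first order deformation. Additivity of traces (Lemma~\ref{ref-A.5-8}) then gives $0=\Tr_M(o_M)=(\length M)\,\Tr_K(o_K)$, and instead of your cap-product/local-duality endgame the paper deduces $o_K=0$ from the injectivity of $\Tr_K$, obtained via Koszul duality ($\RHom_A(K,K)$ formal and quasi-isomorphic to the graded-commutative $A^!$). Your alternative endgame (identifying $\HH_\ast$ of the category of complexes supported at $m$ with local cohomology of forms, and checking $\iota_\pi(e)=0$ iff all $\{x_i,x_j\}\in m$) is plausible in the $K$-linear setting, but to repair the proof you would have to replace the entire $\theta$-based mechanism by an obstruction class living in $\Ext^2(M,M)$ defined intrinsically from $A'$, which is in effect the paper's proof.
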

\begin{remark} This is more or less the setup \cite[\S3]{Kaledin1}. However
there are some differences. 
\begin{itemize}
\item We do not assume that $A'$ is defined over the same ground field as $A$.
\item We do not assume $p=0$.
\item Kaledin does not assume that $(A,m)$ is local. However during the
proof $A$ is localized at $m$. So it is sufficient to consider the local case.
\end{itemize}
\end{remark}
We will now prove Proposition \ref{ref-B.1-11} following the ideas in
\cite[\S3]{Kaledin1}.  However we will not
use Hochschild cohomology of abelian categories. That part of
Kaledin's argument does not generalize to our more general setting but
luckily it is not needed.

\medskip

First of all we may replace $A$ by its completion. Then $A$ contains
a copy of its residue field which we will denote by $K$ as well.
Let  $\Cscr$ be the category of finite
length $A$-modules. 
We may view $\Cscr$ as a $K$-linear category.

If $S\in \Cscr$ then there is an obstruction \cite{lowen4}  $o_S\in \Ext^2_\Cscr(S,S)$ against the existence of a first order deformation
of $S$ over $A'$.
To construct it we choose  projective resolutions $(P_S,d)$ of $S$ over $A$. 
Let $P_S^{\prime}$ be a lifting of $P_S$ to $A'$ (as graded projective $A$-module)
and let $d':P_S'\r P'_S$ be a lifting of $d$ (as an endomorphism of a graded
projective $A$-module). Then $o_S:P_S\r P_S[2]$ is defined
by
$ho_S=(d')^2$ (with obvious notation).

A priori we have $o_S\in \Hom_{D_\Cscr(A)}(S,S[2])$. However it is
well-known that in this case the canonical map $D(\Cscr)\r D_\Cscr(A)$
is an isomorphism so that we may interprete $o_S$ as an element of
$\Ext^2_\Cscr(S,S)$, if we so prefer.

If we have an exact sequence in $\Cscr$
\[
0\r N\r S\r Q\r 0
\]
then the above construction yields a commutative diagram of complexes
\[
\xymatrix{
0\ar[r] &P_N\ar[r]\ar[d]_{o_N} &P_S \ar[r]\ar[d]_{o_S}& P_Q\ar[r]\ar[d]^{o_Q}& 0\\
0\ar[r] &P_N[2]\ar[r] &P_S[2] \ar[r]& P_Q[2]\ar[r]& 0
}
\]
with the horizontal rows being exact. Thus by Lemma \ref{ref-A.5-8}
\[
\Tr_S(o_S)=\Tr_N(o_N)+\Tr_Q(o_Q)
\]
where $\Tr(-)$ is the trace map $\Ext^\ast_{\Cscr}(-,-)\r \HH_{-\ast}(\Cscr)$
(see \S\ref{ref-A-4}). Here in the expression $ \HH_{-\ast}(\Cscr)$ we regard $\Cscr$ as a $K$-linear DG-category via
projective resolutions over $A$.

Applying this to $M$ as in the statement of Proposition \ref{ref-B.1-11} we
find
\[
0=\Tr_M(o_M)=(\length M) \Tr_K(o_K)
\]
($\Tr_M(o_M)=0$ since $M$ has a deformation $M'$).
We conclude $\Tr_K(o_K)=0$. Now $D_{\Cscr}(A)$ has a compact generator
$K$ and hence $D_{\Cscr}(A)\cong D(\RHom_A(K,K))$
 and by Koszul duality, $\RHom_A(K,K)$  is formal and quasi-isomorphic to $A^!=\Ext^\ast_A(K,K)$ which is commutative.
 From this we immediately deduce that
\[
\Tr_K:\Ext^{\ast}_{\Cscr}(K,K)\cong A^!\r \HH_{-\ast}(A^!)\cong \HH_{-\ast}(\Cscr)
\]
is injective. Thus $o_K=0$
 and hence $K$ has a first order deformation.
It is easily seen that this is equivalent to $\{m,m\}\subset m$.

\def\cprime{$'$} \def\cprime{$'$} \def\cprime{$'$}
\providecommand{\bysame}{\leavevmode\hbox to3em{\hrulefill}\thinspace}
\providecommand{\MR}{\relax\ifhmode\unskip\space\fi MR }
\providecommand{\MRhref}[2]{%
  \href{http://www.ams.org/mathscinet-getitem?mr=#1}{#2}
}
\providecommand{\href}[2]{#2}

\end{document}